\newtheorem{theorem}{Theorem}
\newtheorem{thm}{Theorem}
\newtheorem{lemma}[theorem]{Lemma}
\DeclareMathOperator{\arcsinh}{arsinh}
\DeclareMathOperator{\discover}{\mathsf{D}}     
\DeclareMathOperator{\algorithm}{\mathsf{T}}   
\DeclareMathOperator{\area}{area_{\mathbb{H}^2}}     
\DeclareMathOperator{\var}{Var}                          
\DeclareMathOperator{\influence}{Inf}                 
\DeclareMathOperator{\ann}{Ann_p}                     
\DeclareMathOperator{\annsec}{N_p}                   %
\DeclareMathOperator{\annsim}{N}                      
\DeclareMathOperator{\ball}{B}                            
\DeclareMathOperator{\sph}{\partial B}                
\DeclareMathOperator{\cell}{C}         
\begin{document}
\title[Sharp phase transition for Voronoi percolation in $\mathbb{H}^d$]{Sharpness of phase transition for Voronoi percolation in hyperbolic space}

\author{Xinyi Li}
\address[Xinyi Li]{Beijing International Center for Mathematical Research, Peking University}
\email{xinyili@bicmr.pku.edu.cn}

\author{Yu Liu}
\address[Yu Liu]{School of Mathematical Sciences, Peking University}
\email{liuyu8300@stu.pku.edu.cn}

\maketitle 
    
    \begin{abstract}
    	In this paper, we consider Voronoi percolation in  the hyperbolic space $\mathbb{H}^d$ ($d\ge 2$) and show that the phase transition is sharp. More precisely, we show that for Voronoi percolation with parameter $p$ generated by a homogeneous Poisson point process with intensity $\lambda$,
    	there exists $p_c:=p_c(\lambda,d)$ such that the probability of a monochromatic path from the origin reaching a distance of $n$ decays exponentially fast in $n$. We also prove the mean-field lower bound $\mathbb{P}_{\lambda,p}(0\leftrightarrow \infty)\ge c(p-p_c)$ for $p>p_c$.

    \end{abstract}

    \section{Introduction}

    Percolation theory was first introduced by Broadbent and Hammersley in \cite{bernoulli} as a mathematical model for the study of liquids or gas penetrating through porous media. Since then, with a wide range of open questions and many applications in statistical physics, it has received considerable scholarly attention. We refer the reader to \cite{grimmett2} and \cite{bpercolation} for a general introduction to percolation theory.
    
    The most fundamental question in the study of percolation is the existence of phase transition, that is to say, the sheer difference of the behaviour of the model between subcritical and supercritical regimes. In answering this question, Aizenman and Barsky \cite{AB87} and Menshikov \cite{Men86} gave the first proofs of sharp phase transition for Bernoulli percolation on $\mathbb{Z}^d$. More precisely, they showed that there is the exponential decay of the probability of one-arm events in the subcritical regime and the mean-field lower bound in the supercritical phase. Later, more proofs in this direction emerged; see e.g. \cite{grimmett2}, \cite{history2} for more thorough discussion on this matter.

    In parallel with the growing understanding of discrete percolation models, many works have been dedicated to the study of continuum percolation, including Poisson boolean model and Voronoi model, where new challenges arise due to spatial dependencies. 
    We refer readers to \cite{MR96} for more on continuum percolation.
    Recently, more properties concerning continuum model such as noise sensitivity (\cite{ABGM14}, \cite{AB18}), sharp phase transition of Poisson boolean model (\cite{ATT17}) and rate of convergence for the crossing probability of quenched Voronoi percolation (\cite{AGMT16}, \cite{ADG21}) have also been developed.

    One of the most important examples of the continuum percolation model is Poisson Voronoi percolation, in which one colours the Voronoi cells generated by a Poisson point process in $\mathbb{R}^d$  black with probability $p$ and white with probability $(1-p)$, independently of the colours of all other Voronoi cells.
    In \cite{BR06}, Bollob{\'a}s and Riordan proved that the planar Voronoi percolation undergoes a sharp phase transition at $p=1/2$, and the exponential decay of connection probability in subcritical regime in $\mathbb{R}^d$ for every $d\ge 2$ was studied in \cite{sharpv}.

    The Voronoi percolation model can also be constructed in the hyperbolic space $\mathbb{H}^d$ (see \cite{BS01}). Here, different from the Euclidean case, the intensity $\lambda$ of the Poisson point process in $\mathbb{H}^d$ does matter, and we denote the law of the random colouring of $\mathbb{H}^d$ by $\mathbb{P}_{\lambda,p}$. For Voronoi percolation in hyperbolic spaces, Hansen and M{\"u}ller proved that the critical probability for the existence of an infinite cluster tends to $1/2$ as the intensity of the Poisson point process $\lambda$ tends to infinity in \cite{hbvoronoi} and asymptotically equals to ${\pi \lambda}/{3}$ as $\lambda$ tends to 0 in \cite{hansennew}. 
    In this paper we are going to offer a proof of sharpness of phase transition of Voronoi percolation on $\mathbb{H}^d$. Instead of giving the formal definition here, we first roughly describe our main result of this paper:
    \begin{thm}
    	\label{thm1}
    	For $d\ge 2$ and $\lambda>0$, there exists $p_c:=p_c(\lambda,d)$ such that:\\
    	1. For every $p<p_c$, there exists a constant $c_p:=c_p(\lambda,d)>0$ such that 
    	$$\mathbb{P}_{\lambda,p}(0\text{\ is\ connected\ to\ distance\ }n \text{\ by\ a\ black\ path})\le \exp(-c_pn).$$
    	2. There exists $c=c(\lambda,d)>0$ such that for all $p>p_c$, 
    	$$\mathbb{P}_{\lambda,p}(0\text{\ is\ contained\ in an\ unbounded\ black connected component})\ge c(p-p_c).$$
    \end{thm}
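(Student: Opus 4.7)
The plan is to follow the Duminil-Copin--Raoufi--Tassion (DRT) scheme via the OSSS inequality and randomized decision trees, as in \cite{sharpv} for $\mathbb{R}^d$, adapted to hyperbolic geometry. Write $\theta_n(p) := \mathbb{P}_{\lambda,p}(0 \leftrightarrow \partial B_n)$ and $\Sigma_n(p) := \sum_{k=1}^n \theta_k(p)$. The heart of the argument is the differential inequality
$$\frac{d\theta_n}{dp} \ge \frac{c\,n}{\Sigma_n(p)}\, \theta_n(p)\bigl(1-\theta_n(p)\bigr), \qquad c = c(\lambda,d) > 0,$$
uniformly in $n$ and $p \in (0,1)$. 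Once this is established, a now-standard ODE comparison (see e.g.\ \cite{sharpv}) defines the threshold $\tilde p_c := \sup\{p : \Sigma_n(p) \text{ is bounded in } n\}$, shows $\tilde p_c = p_c$, and yields both exponential decay for $p < p_c$ and the mean-field lower bound $\mathbb{P}_{\lambda,p}(0 \leftrightarrow \infty) \ge c(p-p_c)$ for $p > p_c$.

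To obtain the differential inequality, I would first set up an OSSS inequality for coloured Poisson Voronoi in $\mathbb{H}^d$. The two layers of randomness (Poisson locations and Bernoulli colours) can be handled either by discretising $\mathbb{H}^d$ along a fine tiling, encoding the process as a Bernoulli field plus auxiliary randomness, and passing to a continuum limit; or by applying a direct Poisson OSSS inequality in the spirit of those used for continuum models in \cite{ATT17, ADG21}. Either way, one gets $\mathrm{Var}(f) \leq \sum_i \delta_i(\mathcal{A})\,\mathrm{Inf}_i(f)$ for any randomised algorithm $\mathcal{A}$ computing $f = \mathbf{1}_{\{0 \leftrightarrow \partial B_n\}}$. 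The algorithm I would use picks $K$ uniformly in $\{1,\dots,n\}$ and then simultaneously explores the two monochromatic clusters hanging off the hyperbolic sphere $\partial B_K$: outward to decide whether one of them reaches $\partial B_n$, and inward to decide whether it contains the origin. A Poisson point is queried only if its Voronoi cell intersects the explored region. The revealment of a point near distance $r$ from the origin is then essentially the probability, over $K$, that its cell intersects $\partial B_K$, which is of order $1/n$ times the radial extent of the cell. Combined with Russo's formula and a reorganisation of influences along the annuli $\{B_k \setminus B_{k-1}\}_{k \leq n}$, this produces the desired differential inequality, with $\Sigma_n$ appearing in the denominator from the Cauchy--Schwarz-type step that groups influences by annulus.

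The main obstacle I expect is the uniform control of Voronoi cell diameters in $\mathbb{H}^d$. Exponential volume growth of hyperbolic balls both concentrates neighbouring Poisson points at $O(1)$ distances \emph{and} permits rare but very elongated cells, so one must establish a tail bound of the form
$$\mathbb{P}\bigl(\text{the Voronoi cell of a Poisson point at distance } r \text{ has radial extent} > t\bigr) \leq C e^{-ct},$$
with constants $C, c > 0$ independent of $r$. This is the geometric substitute for the simpler Euclidean estimates in \cite{sharpv}; its uniformity in $r$ is delicate, requiring the combination of Poisson concentration on exponentially large hyperbolic shells with a curvature-based comparison of local cell geometries at different scales. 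A secondary technical point is the justification of the continuum OSSS framework in this non-amenable setting, where one must ensure that the exploration terminates almost surely despite the exponential boundary-to-volume ratio of hyperbolic balls.
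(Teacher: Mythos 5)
Your scheme is exactly the one the paper uses: discretize (or otherwise encode) the coloured Poisson process into a product space, apply OSSS with an algorithm that explores the black clusters emanating from a uniformly random sphere $\sph_p(0,K)$, $K\in\{1,\dots,n\}$, combine with a Russo-type formula, and feed the resulting differential inequality $\theta_n'\ge \frac{cn}{\Sigma_n}\theta_n$ into the standard ODE comparison lemma. So on the level of strategy and of the key lemma, you and the paper agree.

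However, the obstacle you single out as "the main obstacle" is misdiagnosed. You worry about a uniform-in-$r$ tail bound for the radial extent of the cell of a Poisson point at distance $r$, and you anticipate needing "a curvature-based comparison of local cell geometries at different scales." Neither issue arises. First, the Poisson process has homogeneous intensity with respect to the hyperbolic volume measure, and the isometry group of $\mathbb{H}^d$ acts transitively, so the law of the Voronoi cell seen from a point is the same at every location; there is no hidden $r$-dependence to control. Second, far from being harder than in $\mathbb{R}^d$, the tail control is \emph{easier}: a hyperbolic ball of radius $t$ has volume growing like $e^{(d-1)t}$, so the empty-ball probability $\mathbb{P}(\mathcal{Z}\cap \ball_p(0,t)=\varnothing)=\exp(-\lambda\, V_{\mathbb{H}^d}(t))$ decays super-exponentially. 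The paper makes do with the weaker (and crude) bound $\exp(-c_1 t^d)$, which it uses via the elementary observation that a cell intersecting $\ball_p(0,n)$ and reaching distance $4t$ forces $\ball_p(0,t)$ to be empty. Your secondary concern about almost-sure termination of the exploration in a non-amenable space is also not a real issue: the algorithm only ever queries points whose cells intersect the compact ball $\ball_p(0,n+1)$, and the integrability of their number follows from the same empty-ball bound, independently of amenability. One small further remark: your informal description of the revealment as "$1/n$ times the radial extent of the cell" undersells what is actually needed; as in \cite{sharpv}, the revealment of a region near $x$ is bounded by the one-arm probability $\mathbb{P}_p(x\leftrightarrow\sph_p(0,k))$, not by a purely geometric cell quantity, and it is the average of these arm probabilities over $k$ that produces the factor $\Sigma_n/n$.
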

    
    While extending existing proofs of sharp phase transition for discrete models to this model is not easy, a new approach making use of randomized algorithms and the OSSS inequality introduced in \cite{osss}, has been developed in \cite{sharpr}.
    In this work, we also follow the line of \cite{sharpv}, adapting the arguments to the hyperbolic case. 
    
    Finally, let us remark that in \cite{stoppingsets}, the authors offered a new version of the OSSS inequality for functionals of a general Poisson process, providing another plausibly feasible approach of Theorem \ref{thm1}. However, in this paper we still chose to discretize the hyperbolic space and apply the original OSSS inequality as it is enough to provide a concise proof of our main result.

    This work is organized as follows: in Section \ref{sec:2} we introduce basic notations and give a few preliminary results that will be used in proof of Theorem \ref{thm1} which is wrapped up in Section \ref{sec:3}

\vspace{3mm}
{\bf Acknowledgments}: The research of the authors is supported by NSFC (No.\ 12071012) and the National Key R\&D Program of China (No.\ 2020YFA0712900).
	
	\section{Notation and preliminaries}\label{sec:2}
	
	\subsection{Hyperbolic space}
	
	Hyperbolic $d-$space ($d\ge 2$), denoted by $\mathbb{H}^d$, is the maximally symmetric, simply connected, $d-$dimensional Riemannian manifold with a constant negative sectional curvature.
	Typically, people study hyperbolic space by constructing models in order to utilize the Euclidean space to describe the hyperbolic space.
	In our following arguments, we choose the Poincar\'e ball model to represent $\mathbb{H}^d$ and now we are going to provide a brief summary of its definitions.
	
	The Poincar\'e ball model can be constructed through equipping the open unit ball $\mathcal{B}^d=\{x\in \mathbb{R}^d:\Vert x\Vert <1\}$ with a hyperbolic distance $d_p$ and a metric tensor. Here, $\Vert \cdot\Vert $ represents the Euclidean norm.
	
	For points $x,y\in \mathcal{B}^d$, the hyperbolic distance is defined by
	$$
	d_p(x,y)\coloneqq 2\arcsinh\left(\frac{\Vert x-y\Vert }{\sqrt{(1-\Vert x\Vert ^2)(1-\Vert y\Vert ^2)}}\right),
	$$
	where $\arcsinh$ is the inverse function of hyperbolic sine. 	
	
	The metric tensor in the Poincar\'e ball model is 
	$$\text{d}s^2=\frac{4\Vert \text{d}x\Vert ^2}{(1-\Vert x\Vert ^2)^2}.$$

	
	In this paper we will consider both Euclidean balls and hyperbolic balls,  and thus in the following context we will use subscripts to distinguish them.
	For example, we use $\ball_p(x,r)$ and $\ball_e(x,r)$ respectively to represent the hyperbolic ball and the Euclidean ball centered at $x$ with radius $r$. 
	Similarly, the hyperbolic sphere centered at $x$ with radius $r$ is denoted by $\sph_p(x,r)$.
	Also notice that every hyperbolic sphere centered at the origin is also an Euclidean sphere centered at the origin. More precisely, 
	$$\sph_p(0,r)=\sph_e(0,\tanh(\frac{r}{2})),\ \ r>0.$$
	
	And we denote the hyperbolic annulus centered at the origin with inner radius $r$ and outer radius $R$ while containing the inner boundary by 
	$$\ann(r,R)\coloneqq\{y\in \mathbb{H}^d: r\le d_p(0,y)<R\}.$$
	
	For a countable set of points $\mathcal{Z}$ in the hyperbolic plane, we denote the corresponding hyperbolic Voronoi cell of $z\in \mathcal{Z}$ by
	$$\cell_p(z;\mathcal{Z})\coloneqq\{y\in \mathbb{H}^d: d_p(y,z)=\inf_{z'\in \mathcal{Z}} d_p(y,z')\}.$$
	

	\subsection{Hyperbolic Poisson point process and Voronoi percolation}

	A homogeneous Poisson point process with constant intensity $\lambda$ on $\mathbb{H}^d$ can be viewed as an inhomogeneous Poisson point process on the ordinary Euclidean space with intensity
	$$u\mapsto \lambda\cdot \mathbf{1}_{\mathbb{D}}(u)\frac{4}{(1-\Vert u \Vert^2)^2}.$$

	A gentle introduction to point process and Poisson point process can be found in \cite{PPP} and in the rest of this paper we will usually use $\mathcal{Z}$ to denote a homogeneous Poisson point process with constant intensity $\lambda$ on $\mathbb{H}^d$. 
	
	
	After presenting the definition of Poisson point process on $\mathbb{H}^d$, we attach to each point in $\mathcal{Z}$ a randomly chosen colour independently. Precisely, every point in $\mathcal{Z}$ is coloured black with probability $p$ and white with probability $1-p$, and the colours of different points in $\mathcal{Z}$ are independent. By using $\mathcal{Z}_b$ (resp. $\mathcal{Z}_w$) to denote the black points (resp. white points) after the colouring process, we can also view $\mathcal{Z}_b$ and $\mathcal{Z}_w$ as independent Poisson point processes on $\mathbb{H}^d$ with respective intensities $\lambda\cdot p$ and $\lambda\cdot (1-p)$. 	
	Let $\mathbb{P}_{\lambda,p}$ denote the law of $(\mathcal{Z}_b,\mathcal{Z}_w)$, or equivalently the law of $\mathcal{Z}$ with its colours, then
	the measure $\mathbb{P}_{\lambda,p}$ induces a colouring $\omega$ of the hyperbolic space $\mathbb{H}^d$ defined as follows:
	$$\omega(y)=1, \exists z\in \mathcal{Z}_b,\ y\in \cell_p(z;\mathcal{Z}),\quad 
		\omega(y)=0, \text{otherwise}.$$

	From now on we will use $\mathbb{P}_p$ for $\mathbb{P}_{\lambda,p}$ when it does not cause misunderstanding since we are going to prove Theorem \ref{thm1} for every fixed $\lambda>0$.
	
	For $x,y\in \mathbb{H}^d$, let $\{x\leftrightarrow y\}$ denote the event that there exists a continuous path of black points connecting $x$ to $y$, and let $\{0\leftrightarrow \infty\}$ denotes the event that the origin belongs to a infinite-volume connected component of black points.
	For two subsets $A$ and $B$ of $\mathbb{H}^d$, we use $\{A\leftrightarrow B\}$ to denote $\{\exists\ x\in A,\ y\in B \text{\ such\ that }x\leftrightarrow y\}$, and briefly write $\{x\leftrightarrow B\}$ for $\{\{x\}\leftrightarrow B\}$.
	Also for $p\in [0,1]$, $\lambda>0$, $n\ge 0$, define
	$$\theta(p)\coloneqq\mathbb{P}_{p}(0\leftrightarrow \infty),\quad 
	\theta_n(p)\coloneqq\mathbb{P}_{p}(0\leftrightarrow \sph_p(0,n)),\quad 
	p_c\coloneqq\inf\{p\in [0,1]: \theta(p)>0\}.$$
	Then we can abbreviate $\{0\ \text{is\ contained\ in\ an\ unbounded\ black\ connected\ component}\}$ as $\{0 \leftrightarrow \infty\}$ and $\{0\  \text{is\ contained\ to\ distance\ } n\ \text{by\ a\ black\ path}\}$ as $\{0\leftrightarrow \sph_p(0,n)\}$.
	%
	%
	%
	%
	
	\subsection{Increasing events and the FKG inequality}
	
	An event $A$ is said to be $increasing$ if for every configurations $(\mathcal{Z}_b,\mathcal{Z}_w)$ and $(\mathcal{\tilde{Z}}_b,\tilde{\mathcal{Z}}_w)$, 
	$$
	\left.
	\begin{aligned}
	&(\mathcal{Z}_b,\mathcal{Z}_w)\in A,\\
	&\mathcal{Z}_b \subseteq \mathcal{\tilde{Z}}_b,\\
	&\tilde{\mathcal{Z}}_w\subseteq \mathcal{Z}_w
	\end{aligned}
	\right\}
	\Rightarrow (\mathcal{\tilde{Z}}_b,\tilde{\mathcal{Z}}_w)\in A$$
	
	An event $W$ is said to be $decreasing$ if its complement $W^c$ is increasing.

	As in Bernoulli percolation model, we also have  FKG inequality for Voronoi percolation in $\mathbb{H}^d$ listed in the following context. We will avoid the proof and refer the reader to \cite[Chapter 8]{bpercolation} for more information about this lemma.
	
	\begin{lemma}[\cite{bpercolation},Lemma 14]
		Let $A$ and $B$ be two increasing events. Then for any $\lambda>0$, $0\le p\le 1$,
		\begin{equation}\label{FKG}
			\mathbb{P}_{p}(A\cap B)\ge \mathbb{P}_{p}(A)\mathbb{P}_{p}(B).
		\end{equation}
	\end{lemma}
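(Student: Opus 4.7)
The plan is to reduce the assertion to the classical Harris--FKG inequality for product Bernoulli measures via a discretization argument, as in the Euclidean case treated in \cite{bpercolation}. Recall that $\mathcal{Z}_b$ and $\mathcal{Z}_w$ can be viewed as independent Poisson point processes on $\mathbb{H}^d$ with intensities $\lambda p$ and $\lambda(1-p)$ respectively. An event $A$ is increasing precisely when its indicator $\mathbf{1}_A$ is non-decreasing in the configuration $\mathcal{Z}_b$ and non-increasing in $\mathcal{Z}_w$ with respect to set inclusion.

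First, I would fix a sequence of measurable partitions $\{K_i^{(n)}\}_{i\ge 1}$ of $\mathbb{H}^d$ whose maximal cell diameter tends to $0$ as $n\to\infty$. For each cell, set
$$X_i^{b,(n)}=\mathbf{1}\{\mathcal{Z}_b\cap K_i^{(n)}\ne\emptyset\}, \qquad X_i^{w,(n)}=\mathbf{1}\{\mathcal{Z}_w\cap K_i^{(n)}\ne\emptyset\}.$$
By the independence of $\mathcal{Z}_b$ and $\mathcal{Z}_w$ together with the independence of a Poisson process evaluated on disjoint sets, these variables are mutually independent Bernoullis with explicit parameters depending only on the hyperbolic volume of $K_i^{(n)}$. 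I would then approximate the increasing events $A$ and $B$ by events $A_n, B_n$ that depend only on the family $(X_i^{b,(n)},X_i^{w,(n)})_i$, are non-decreasing in the black coordinates and non-increasing in the white coordinates, and satisfy $\mathbb{P}_p(A_n\triangle A)\to 0$ and similarly for $B_n$.

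On the discretized level, the classical Harris--FKG inequality for product measures (after reversing the order on each white coordinate) yields $\mathbb{P}_p(A_n\cap B_n)\ge \mathbb{P}_p(A_n)\,\mathbb{P}_p(B_n)$; passing to the limit along $n$ then gives \eqref{FKG}. The main obstacle I expect is the construction of the monotone approximations $A_n, B_n$ themselves: because Voronoi cells in $\mathbb{H}^d$ are determined globally by $\mathcal{Z}$, a single additional point can in principle reshape cells arbitrarily far away, so increasing events are not naturally of finite range. To handle this I would exploit the almost-sure local finiteness of $\mathcal{Z}$ together with the fact that the cells intersecting any fixed hyperbolic ball are, with probability tending to one, determined by the points of $\mathcal{Z}$ in a slightly larger but still bounded region, and combine this with a truncation scheme that preserves monotonicity in the black and white coordinates before taking limits.
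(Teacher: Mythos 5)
The paper does not actually prove this lemma; it simply points the reader to Chapter~8 of the cited percolation book, where the Euclidean analogue (Lemma~14 there) is established. So there is no "paper proof" to compare against; I can only assess your sketch on its own merits and against the standard argument it gestures at.

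Your overall strategy — decompose the colored configuration as a pair of independent Poisson processes $(\mathcal{Z}_b,\mathcal{Z}_w)$, discretize into a product of independent Bernoulli coordinates, invoke Harris for product measures with the order reversed on the white coordinates, then pass to the limit — is exactly the standard route and is the one used in the cited reference. It is correct in spirit. However, I would flag two issues with how you locate the difficulty. First, the worry you raise about Voronoi cells being determined globally by $\mathcal{Z}$ is not really the obstacle: an \emph{increasing event} here is, by definition, one that is monotone in the pair $(\mathcal{Z}_b,\mathcal{Z}_w)$ under the partial order $\mathcal{Z}_b\subseteq\tilde{\mathcal{Z}}_b$, $\tilde{\mathcal{Z}}_w\subseteq\mathcal{Z}_w$; global dependence of the cells is already baked into that definition and costs you nothing in the FKG argument, which is a statement about the Poisson law, not about the geometry. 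Second, the genuine technical gap — which you mention only in passing as ``$\mathbb{P}_p(A_n\triangle A)\to 0$'' — is the construction of \emph{monotone} events $A_n$ measurable with respect to finitely many occupation indicators $X_i^{b,(n)},X_i^{w,(n)}$ that converge to $A$. For a finite-range increasing event supported on a bounded region $R$, one should construct inner/outer monotone approximations on the refining partition of $R$ (e.g.\ the event that \emph{every}, resp.\ \emph{some}, placement of a point in each occupied cell realizes $A$), show they sandwich $A$ and converge in probability as the mesh shrinks, and only then extend to general increasing events by a monotone-class or exhaustion argument. That construction is routine but needs to be spelled out; your proposal leaves it as an unelaborated assertion, and as written the claim that one can always find such $A_n$ with $\mathbb{P}_p(A_n\triangle A)\to 0$ while keeping monotonicity in the discretized coordinates is the step a careful reader would ask you to justify.
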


	\subsection{A Russo-type formula}
	
	For an increasing event $A$ and a configuration $(\mathcal{Z}_b,\mathcal{Z}_w)$, we define the set of $pivotal\  points$ by
	$$\mathsf{Piv}_A(\mathcal{Z}_b,\mathcal{Z}_w)\coloneqq\{x\in \mathcal{Z}:\mathbf{1}_A(\mathcal{Z}_b\setminus \{x\},\mathcal{Z}_w\cup \{x\})\neq \mathbf{1}_A(\mathcal{Z}_b\cup \{x\},\mathcal{Z}_w\setminus \{x\})\}.$$
	
	And define an increasing event $A$ $local$ if there exists $n\ge 0$ such that $A$ is measurable with respect to the $\sigma$-field generated by $\{\omega(x)\}_{x\in \ball_p(0,n)}$.

	\begin{lemma}
		\label{russo}
		The function $p\rightarrow \mathbb{P}_{p}(A)$ is differentiable if $A$ is a local increasing event and 
		\begin{equation}
			\frac{\mathrm{d}\mathbb{P}_{p}(A)}{\mathrm{d}p}=\mathbb{E}_{p}(|\mathsf{Piv}A|).
		\end{equation}
	\end{lemma}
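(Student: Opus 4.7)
The plan is to condition on the underlying Poisson process $\mathcal{Z}$, reduce the statement to the classical Russo formula for finite Bernoulli percolation, and then integrate back over the law of $\mathcal{Z}$.

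First I would exploit the locality of $A$. Since $A$ is measurable with respect to $\{\omega(y)\}_{y\in \ball_p(0,n)}$ for some $n$, the indicator $\mathbf{1}_A$ depends only on the colours attached to the points of $\mathcal{Z}$ whose Voronoi cells intersect $\ball_p(0,n)$. Let $\mathcal{R}=\mathcal{R}(\mathcal{Z})$ denote this (random, almost surely finite) subset of $\mathcal{Z}$. Conditional on $\mathcal{Z}$, the colouring of $\mathcal{R}$ is a product Bernoulli$(p)$ measure, so
$$\mathbb{P}_{p}(A\mid \mathcal{Z})=\sum_{S\subseteq \mathcal{R}}\mathbf{1}_A(S,\mathcal{R}\setminus S)\,p^{|S|}(1-p)^{|\mathcal{R}|-|S|}$$
is a polynomial in $p$ of degree at most $|\mathcal{R}|$. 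Applying the classical Russo formula for Bernoulli percolation on the finite index set $\mathcal{R}$ yields
$$\frac{d}{dp}\mathbb{P}_{p}(A\mid\mathcal{Z})=\mathbb{E}_{p}\bigl[\,|\mathsf{Piv}_A|\,\big|\,\mathcal{Z}\bigr],$$
since points outside $\mathcal{R}$ are never pivotal.

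Next I would pass from conditional to unconditional by averaging over $\mathcal{Z}$. Because $|\mathsf{Piv}_A|\le|\mathcal{R}|$ almost surely and $|\mathcal{R}|$ does not depend on $p$, the natural dominating function is $|\mathcal{R}|$ itself; provided $\mathbb{E}[|\mathcal{R}|]<\infty$, dominated convergence legitimises the interchange of $d/dp$ with the outer expectation, producing the claimed identity $\frac{d}{dp}\mathbb{P}_p(A)=\mathbb{E}_p[|\mathsf{Piv}_A|]$. A clean way to verify this integrability is via the Slivnyak--Mecke formula,
$$\mathbb{E}[|\mathcal{R}|]=\lambda\int_{\mathbb{H}^d}\mathbb{P}\bigl(\cell_p(z;\mathcal{Z}\cup\{z\})\cap \ball_p(0,n)\ne\emptyset\bigr)\,dz,$$
together with the observation that if $d_p(0,z)=R\gg n$ then the event inside forces $\mathcal{Z}$ to miss a hyperbolic ball of radius at least $R-n$, an event of probability roughly $\exp(-c\,e^{(d-1)R})$.

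The only place where hyperbolic geometry genuinely enters is this final integrability check, where the doubly-exponential decay of the empty-ball probability easily dominates the exponential growth of hyperbolic spheres, making the integral converge. Apart from this, the proof is the standard ``condition on the Poisson process, then apply the discrete Russo formula'' scheme, and I do not anticipate further conceptual obstacles; the integrability of $|\mathcal{R}|$ is really the only nontrivial input, and even there the estimate is quite generous.
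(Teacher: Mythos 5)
Your proposal is correct and follows the same overall scheme as the paper's proof: condition on the Poisson process $\mathcal{Z}$, observe that the conditional law of the colouring on the (almost surely finite) set of relevant points is product Bernoulli, apply the classical discrete Russo formula conditionally, and then pass the derivative through the outer integral after checking that $\mathbb{E}[|D_n(\mathcal{Z})|]<\infty$, where $D_n(\mathcal{Z})$ is your $\mathcal{R}$. The analytic route you take to justify the interchange is, however, slightly different and a bit leaner than the paper's. The paper first writes the increment $\mathbb{P}_{p+\delta}(A)-\mathbb{P}_p(A)$ as $\int_p^{p+\delta}\mathbb{E}_s(|\mathsf{Piv}_A|)\,ds$ via Fubini, and then must additionally prove that $s\mapsto\mathbb{E}_s(|\mathsf{Piv}_A|)$ is continuous before invoking the fundamental theorem of calculus to get differentiability; this continuity check takes up a nontrivial portion of their argument. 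You instead differentiate under the integral directly by dominated convergence, using $|\mathcal{R}|$ as a $p$-uniform dominating function for the conditional derivative $\partial_p\mathbb{P}_p(A\mid\mathcal{Z})=\mathbb{E}_p[|\mathsf{Piv}_A|\mid\mathcal{Z}]\le|\mathcal{R}|$ together with the mean value theorem on difference quotients, which yields existence of the derivative and the formula simultaneously with no separate continuity step. Your use of the Slivnyak--Mecke formula to bound $\mathbb{E}[|\mathcal{R}|]$ is also a clean alternative to the paper's direct annular decomposition of $D_n(\mathcal{Z})$, and your observation that the empty-ball probability decays doubly exponentially (like $\exp(-c\,e^{(d-1)R})$) while the sphere volume only grows singly exponentially is exactly the right reason the integral converges; the paper uses the weaker but sufficient bound $\exp(-c_1 t^d)$. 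In short: same skeleton, but your dominated-convergence shortcut eliminates the continuity lemma and is arguably the tidier writeup.
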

	
	The main idea of this lemma can be seen in [\cite{sharpv}, Lemma 4]. Here, we supplement the details for the proof of the integrability of $|\mathsf{Piv}A|$ and the continuity of $\mathbb{E}_s(|\mathsf{Piv}_A|)$ which is not mentioned in the former paper.
	
	\begin{proof}[Proof of Lemma 2]
		
		We denote the law of $\mathcal{Z}$ by d$\mathcal{Z}$ and write
		$$\mathbb{P}_{p+\delta}(A)-\mathbb{P}_p(A)=\int_{\mathcal{Z}}\mathbb{P}_{p+\delta}(A|\mathcal{Z})-\mathbb{P}_p(A|\mathcal{Z})\text{d}\mathcal{Z}. $$
		Notice that when $\mathcal{Z}$ is given, the law of $\mathcal{Z}_b$ is Bernoulli percolation with parameter $p$ on all points of $\mathcal{Z}$. Assume that $A$ only depends on the $\sigma$-field generated by $\{\omega(x)\}_{x\in \ball_p(0,n)}$ for a fixed $n$. Then we can apply Russo's formula for Bernoulli percolation(\cite[Section 2.4]{grimmett2}) to get the differentiability of $\mathbb{P}_p(A|\mathcal{Z})$ with respect to $p$ and 
		$$\frac{\text{d}\mathbb{P}_p(A|\mathcal{Z})}{\text{d}p}=\sum_{x\in \mathcal{Z}}\mathbb{E}_p(\mathbf{1}_{x\in \mathsf{Piv}_A}|\mathcal{Z}).$$
		Therefore, combining Fubini's gives 
		\begin{equation*}
			\begin{aligned}
			\mathbb{P}_{p+\delta}(A)-\mathbb{P}_p(A)
			&=\int_{\mathcal{Z}}\int_p^{p+\delta}\mathbb{P}_s'(A|\mathcal{Z})\text{d}s\text{d}\mathcal{Z}
			\ \ \  =\int_{\mathcal{Z}}\int_p^{p+\delta}\mathbb{E}_s(|\mathsf{Piv}_A\Vert \mathcal{Z})\text{d}s\text{d}\mathcal{Z}\\
			&=\int_p^{p+\delta}\!\!\!\!  \int_{\mathcal{Z}}\mathbb{E}_s(|\mathsf{Piv}_A\Vert \mathcal{Z})\text{d}\mathcal{Z}\text{d}s
			=\int_p^{p+\delta}\mathbb{E}_s(|\mathsf{Piv}_A|)\text{d}s.
			\end{aligned}
		\end{equation*}
		
		Therefore, it suffices to explain that $|\mathsf{Piv}_A|$ is integrable and that $\mathbb{E}_s(|\mathsf{Piv}_A|)$ is continuous with respect to $s$. 
		
		For the former question, let $D_m(\mathcal{Z})\subset \mathcal{Z}$ denote the set of points in $\mathcal{Z}$ whose cells intersect the ball $\ball_p(0,m)$. Then the locality of  $A$ implies
		\begin{equation}\label{piv}
			|\mathsf{Piv}_A|\le |D_n(\mathcal{Z})|.
		\end{equation}
		
		First notice that by standard estimates of the Poisson-Voronoi tessellation, there exists $c_1>0$ such that for every $t\ge n$,
		\begin{equation}\label{4times}   
			\mathbb{P}_p(D_n(\mathcal{Z})\cap \ball_p(0,4t)^c\neq \varnothing)\le \mathbb{P}_p(\mathcal{Z}\cap \ball_p(0,t)=\varnothing)\le \exp(-c_1t^d).
		\end{equation}
		
		Therefore, combining \eqref{piv} \eqref{4times} and separating $D_n(\mathcal{Z})$ into disjoint annuli gives
		$$\begin{aligned}
		&\quad\  \mathbb{E}_s(|D_n(\mathcal{Z})|)\\
		&=\mathbb{E}_s(|D_n(\mathcal{Z})\cap \ball_p(0,4n)|)+\sum_{k=4n}^{\infty}\mathbb{E}_s(|D_n(\mathcal{Z})\cap \ann(k,k+1)|)\\
		&\le \mathbb{E}_s(|D_n(\mathcal{Z})\cap \ball_p(0,4n)|)+\sum_{k=4n}^{\infty}\mathbb{P}_s(\mathcal{Z}\cap \ball_p(0,\frac{k}{4})=\varnothing)\mathbb{E}_s(|\mathcal{Z}\cap \ann(k,k+1)|)<+\infty.
		\end{aligned}$$
		
		As for the second part, notice that once we prove the continuity of $\mathbb{E}_s(|\mathsf{Piv}_A\cap \ball_p(0,4n)|)$ and $\mathbb{E}_s(|\mathsf{Piv}_A\cap \ann(k,k+1)|)$, $k\ge 4n$, the continuity of 
		$\mathbb{E}_s(|\mathsf{Piv_A}|)$ with respect to $s$ then follows from the uniform convergence of 
		$$\mathbb{E}_s(|\mathsf{Piv}_A\cap \ball_p(0,4n)|)+\sum_{k=4n}^{\infty}\mathbb{E}_s(|\mathsf{Piv}_A\cap \ann(k,k+1)|).$$
		
		For every fixed $n\ge 1$, for every $\epsilon>0$, using the integrability of $\mathbb{E}_s(|\mathcal{Z}\cap \ball_p(0,4n)|)$ we can choose large $K$ and $\delta>0$ such that when $|t-s|<\delta$, 
		$$\begin{aligned}
		&\quad\ |\mathbb{E}_t(|\mathsf{Piv}_A\cap \ball_p(0,4n)|)-\mathbb{E}_s(|\mathsf{Piv}_A\cap \ball_p(0,4n)|)|\\
		&\le \int_{\{ |\mathcal{Z}\cap \ball_p(0,4n)|\le K\}}
		|\mathbb{E}_t(|\mathsf{Piv}_A\cap \ball_p(0,4n)||\mathcal{Z})-\mathbb{E}_s(|\mathsf{Piv}_A\cap \ball_p(0,4n)||\mathcal{Z})|\text{d}\mathcal{Z}\\
		&+\sum_{H=K}^{+\infty}\int_{\{|\mathcal{Z}\cap \ball_p(0,4n)|=H\}}|\mathbb{E}_t(|\mathsf{Piv}_A\cap \ball_p(0,4n)||\mathcal{Z})-\mathbb{E}_s(|\mathsf{Piv}_A\cap \ball_p(0,4n)||\mathcal{Z})|\text{d}\mathcal{Z}\\
		&\le K[1-(1-|t-s|)^K]+\mathbb{E}(|\mathcal{Z}\cap \ball_p(0,4n)|\mathbf{1}_{\{|\mathcal{Z}\cap \ball_p(0,4n)|\ge K\}})<\epsilon.
		\end{aligned}$$
	
		Similarly, we can derive the continuity of $\mathbb{E}_s(|\mathsf{Piv}_A\cap \ann(k,k+1)|)$ for every $k\ge 4n$, and therefore conclude that $\mathbb{E}_s(|\mathsf{Piv_A}|)$ is continuous with respect to $s$.
		
	\end{proof}
	
	\subsection{The OSSS inequality}
	
	The OSSS inequality was introduced in \cite{osss} and states that for a given boolean function $f$ and a randomized algorithm $\algorithm$ determining $f$, the variance of this boolean function can be controlled by the revealments of each coordinate during the process of $\algorithm$ and the influences of coordinates. 
	
	Assume that $I$ is a countable set, $(\Omega^I,\pi^{\otimes I})$ is a product probability space and $f$ is a function mapping $\Omega^I$ to  $\{0,1\}$.
	A decision tree $\algorithm$ determining $f$ is an algorithm that takes $\omega=(\omega_i)_{i\in I}\in \Omega^I$ as an input, and reveals the coordinates of $\omega$ step by step based on the values of $\omega$ revealed so far. The algorithm stops when the possible values of $f$ no longer depend on the values of the coordinates of $\omega$ that have not been revealed.
		
	The revealment of the $i$-th coordinate is defined by 
	$$\delta_i(\algorithm):=\pi^{\otimes I}(\omega: \algorithm\ \text{reveals\ the\ value\ of\ }\omega_i),$$
	and the influence of the $i$-th coordinate is given by
	$$\influence_i(f):=\pi^{\otimes I}(\omega:f(\omega)\neq f(\tilde{\omega})),$$
	where $\tilde{\omega}$ represents the random element in $\Omega^I$ which is the same as $\omega$ in every coordinate while has the $i$-th coordinate resampled independently.
	
	The OSSS inequality is stated as follows.
	\begin{lemma}[OSSS]
		\label{OSSS}	
		For any function $f:\Omega^I\rightarrow \{0,1\}$ and any algorithm $\algorithm$ determining $f$, which stops in finite steps almost surely, we have
		\begin{equation}\label{osss}
			\var(f)\le \sum_{i\in I}\delta_i(\algorithm)\influence_i(f).
		\end{equation}
	\end{lemma}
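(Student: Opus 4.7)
The proof plan follows the original Doob martingale argument of \cite{osss}. First, I would fix a sample $\omega=(\omega_i)_{i\in I}\sim \pi^{\otimes I}$ and, letting $J_1,J_2,\ldots$ denote the sequence of coordinates queried by $\algorithm$ on input $\omega$ with almost-surely finite stopping time $\tau$, introduce the filtration $\mathcal{F}_k:=\sigma(J_1,\omega_{J_1},\ldots,J_k,\omega_{J_k})$ and the Doob martingale $M_k:=\mathbb{E}[f(\omega)\mid \mathcal{F}_k]$. Since $\algorithm$ determines $f$, the martingale terminates with $M_\tau = f(\omega)$ and $M_0 = \mathbb{E}[f]$, so the $L^2$-orthogonality of martingale increments gives
\begin{equation*}
\var(f) \;=\; \sum_{k\ge 1} \mathbb{E}\bigl[(M_k - M_{k-1})^2\, \mathbf{1}_{\tau\ge k}\bigr].
\end{equation*}

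Second, I would analyze each squared increment via a resampling coupling. On the event $\{\tau \ge k,\, J_k = i\}$, passing from $M_{k-1}$ to $M_k$ amounts to revealing $\omega_i$ on top of the information in $\mathcal{F}_{k-1}$. Introducing an independent copy $\omega_i'\sim \pi$ and writing $\tilde\omega^{(i)}$ for $\omega$ with its $i$-th coordinate replaced by $\omega_i'$, Jensen's inequality combined with $f\in\{0,1\}$ (so that $(M_k-M_{k-1})^2\le|M_k-M_{k-1}|$) yields on this event
\begin{equation*}
(M_k - M_{k-1})^2 \;\le\; \mathbb{E}\bigl[\mathbf{1}_{f(\omega)\neq f(\tilde\omega^{(i)})}\,\big|\,\mathcal{F}_{k-1}\bigr].
\end{equation*}
Summing this over $k$ and using that the events $\{\tau\ge k,\, J_k=i\}$ are pairwise disjoint (assuming without loss of generality that $\algorithm$ does not re-query coordinates) with union $\{\algorithm\text{ reveals coordinate }i\}$, I would deduce
\begin{equation*}
\var(f) \;\le\; \sum_{i\in I}\mathbb{P}\bigl[\algorithm\text{ reveals }i \text{ and } f(\omega)\neq f(\tilde\omega^{(i)})\bigr].
\end{equation*}

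Third, I would upgrade this joint probability to the product $\delta_i(\algorithm)\,\influence_i(f)$. The key observation is that the event $\{\algorithm\text{ reveals }i\}$ is measurable with respect to $(\omega_j)_{j\neq i}$ because the algorithm's adaptive choices depend only on coordinates it has previously revealed; this should allow one to integrate out $\omega_i$ and $\omega_i'$ to produce the influence factor, while the remaining integration over $(\omega_j)_{j\neq i}$ produces the revealment factor. The main obstacle is precisely this last decoupling: the event that $i$ is queried and the event that coordinate $i$ is pivotal are both measurable with respect to $(\omega_j)_{j\neq i}$ and are in general positively correlated, so the factorization is not immediate and requires a more delicate bookkeeping, which can be carried out either via a careful telescoping of the hybrid configurations interpolating between two independent samples $\omega,\omega'$, or by induction on the depth of the decision tree as in the original argument of \cite{osss}.
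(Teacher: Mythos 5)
The paper does not prove this lemma at all: it simply cites \cite{osss} and states the inequality without argument, so there is no in-paper proof to compare against. Your attempt is a genuine reconstruction attempt, and it should be judged on its own.

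Your step~1 (Doob martingale decomposition of the variance along the filtration generated by the revealed coordinates) and step~2 (bounding each squared increment by the conditional probability of a resampling disagreement) are correct, and they yield the valid bound
\begin{equation*}
\var(f)\;\le\;\sum_{i\in I}\mathbb{P}\bigl[\algorithm\ \text{reveals}\ i \ \text{and}\ f(\omega)\neq f(\tilde\omega^{(i)})\bigr].
\end{equation*}
The problem is that this bound is \emph{strictly weaker} than the OSSS bound $\sum_i\delta_i(\algorithm)\influence_i(f)$, and the factorization
$\mathbb{P}[A\cap B]\le\mathbb{P}[A]\,\mathbb{P}[B]$ that you would need at step~3 is simply false here, not merely ``not immediate.'' As you yourself observe, the event $\{\algorithm\ \text{reveals}\ i\}$ and the event $\{f(\omega)\neq f(\tilde\omega^{(i)})\}$ are positively correlated whenever the tree is doing anything sensible (the tree queries $i$ \emph{because} $i$ is likely to matter), and positive correlation gives the inequality in the wrong direction. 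A concrete counterexample: take $I=\{1,2\}$, $\pi$ the uniform measure on $\{0,1\}$, $f(\omega)=\omega_1\wedge\omega_2$, and the tree that queries $\omega_1$ first and queries $\omega_2$ only when $\omega_1=1$. Then $\delta_1=1$, $\delta_2=\tfrac12$, $\influence_1=\influence_2=\tfrac14$, so $\sum_i\delta_i\influence_i=\tfrac38$, whereas
$\mathbb{P}[\text{reveal}\ 1,\ \text{pivot}\ 1]+\mathbb{P}[\text{reveal}\ 2,\ \text{pivot}\ 2]=\tfrac14+\tfrac14=\tfrac12>\tfrac38$.
So the quantity controlled in your step~2 genuinely exceeds the OSSS right-hand side, and no amount of ``bookkeeping'' on top of that bound can recover \eqref{osss}.

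The actual OSSS proof therefore does not pass through the joint probability $\mathbb{P}[\text{reveal}\ i\ \text{and pivot}]$ at all; the interpolation between two independent samples must be organized so that, at the step where coordinate $i$ is replaced, the (unconditional) product measure on the remaining coordinates is preserved, yielding an increment that is bounded by $\influence_i(f)$ \emph{times} the indicator of revealment before taking expectations, rather than a single joint event. Without that extra structural input the argument as written does not close. One smaller inaccuracy: you assert that the pivotality event is $\sigma\bigl((\omega_j)_{j\neq i}\bigr)$-measurable, but the event $\{f(\omega)\neq f(\tilde\omega^{(i)})\}$ that defines $\influence_i$ depends on $\omega_i$ and on the resampled $\omega_i'$ as well; only the deterministic property ``coordinate $i$ is pivotal given $\omega_{-i}$'' is $\sigma(\omega_{-i})$-measurable.
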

	
	\section{Proofs}\label{sec:3}
	
	\subsection{Discretization of Voronoi percolation}
	
	Before turning back to the proof of our main theorem, we first introduce an appropriate product space to encode the measure of Voronoi percolation on the Poincar\'e disk model so that we are able to use Lemma \ref{OSSS}. Here, to present the idea clearly, we will only focus on the case for $d=2$. And as for higher dimensions, using the polar coordinate to denote the volume of a measurable set and cutting the angles appropriately just like what we are going to do for $d=2$, we can also derive Lemma \ref{lem4} and give the proof of the main theorem.
	
	First note that for $d=2$ and every measurable subset $B\subset \mathbb{H}^2$, the area is given by
	$$\area(B)=\int\!\!\! \int_B \frac{4}{(1-x^2-y^2)}\text{d}x\text{d}y.$$
	
	Now we are going to divide the hyperbolic space $\mathbb{H}^2$ up into countable disjoint hyperbolic annuli centered at the origin and cut each annulus into several sectors while making sure that the areas of these sectors are uniformly bounded.
	
	Precisely, for a given $\epsilon>0$, we first regard $\mathbb{H}^2$ as unions of annuli of the form $\ann(2k\epsilon, 2(k+1)\epsilon)$, $k\ge 0$.
	Define $N_k^{\epsilon}=[\frac{\sinh((2k+1)\epsilon)}{\sinh\epsilon}]+1$. 
	For each $k\ge 0$, we then divide $\ann(2k\epsilon, 2(k+1)\epsilon)$ into $N_k^{\epsilon}$ annulus sectors of the form
	$$\annsec(2k\epsilon, 2(k+1)\epsilon, \theta_1, \theta_2)=\{re^{i\theta}:2k\epsilon\le d_p(re^{i\theta},0)<2(k+1)\epsilon, \theta \in [\theta_1,\theta_2)\},$$ 
	while making sure that each two of these sectors are disjoint and congruent. 
	
	Therefore, there exists a determined constant $C$ such that for every $\epsilon>0$ and each $k\ge 0$, 
	$$\area(\annsec(2k\epsilon, 2(k+1)\epsilon, \theta_1, \theta_2))\le  C(\sinh\epsilon)^2.$$
	
	In order to denote annulus sectors clearly, we introduce the definition of representative points:
	
	\begin{enumerate}
		\item For $k=0$, let $x_0^{\epsilon}=0$ be the representative point of the degenerate annulus $\ann(0, 2\epsilon)=\ball_p(0, 2\epsilon)$. 
		
		\item For $k\ge 1$, define the representative point of $\annsec(2k\epsilon, 2(k+1)\epsilon, \theta_1, \theta_2)$ as the exact point $re^{i\theta_1}$ satisfying $d_p(re^{i\theta_1},0)=2k\epsilon$. And denote the representative points contained in $\ann(2k\epsilon, 2(k+1)\epsilon)$ by $x_{k,l}^{\epsilon}$, $l=1, 2, \cdots, N_k^{\epsilon}$. 
	\end{enumerate}

	Let $K_{\epsilon}$ be the set of all representative points, i.e.,
	$K_{\epsilon}=\cup_{k=1}^{\infty}\cup_{l=1}^{N_k^{\epsilon}}\{x_{k,l}^{\epsilon}\}\cup\{x_0^{\epsilon}\}.$
	Then for every annulus sector $\annsec(2k\epsilon, 2(k+1)\epsilon, \theta_1, \theta_2)$, there exists a unique representative point, say, $x_{k,l}^{\epsilon}$ contained in this annulus sector. We then denote this region by $\annsim_{x_{k,l}^{\epsilon}}$ for simplicity.
	
	
		
	For every $x\in K_{\epsilon}$, define 
	$$\mathcal{Z}^b_x=\mathcal{Z}^b\cap \annsim_x,\ \mathcal{Z}^w_x=\mathcal{Z}^w\cap \annsim_x,\ \text{and}\   \mathcal{Z}_x=\mathcal{Z}^w_x\cup \mathcal{Z}^b_x.$$
	
	Let $(\Omega_x, \pi_x)$ be the space associated with the random variable $\mathcal{Z}_x=(\mathcal{Z}_x^b,\mathcal{Z}_x^w)$, and it is obvious that the random variable $\mathcal{Z}_x$ are independent for different $x$.
	Then the product space $(\prod_{x\in K_{\epsilon}}\annsim_x,\otimes_{x\in K_{\epsilon}}\pi_x)$ agrees with the original space and we will thus use $\mathbb{P}_{p}$ to denote the probability measure for the product space.

	For every $x\in K_{\epsilon}$ and every increasing event $A$, we define 
	$$\influence^{\epsilon}_x(A)=\mathbb{P}_{p}(\mathbf{1}_A(\mathcal{Z})\neq\mathbf{1}_A(\tilde{\mathcal{Z}})),$$
	where $\mathcal{Z}=(\mathcal{Z}_x)_{x\in K_{\epsilon}}$ has law $\otimes_{x\in K_{\epsilon}}\pi_x$, and $\tilde{\mathcal{Z}}$ is a random variable which equals to $\mathcal{Z}$ except on $x-$coordinate which is resampled independently.

	\begin{lemma}
		\label{lem4}
		For every local increasing event $A$, 
		\begin{equation}\label{derivative}
			\frac{\mathrm{d}\mathbb{P}_p(A)}{\mathrm{d}p}\ge \frac{1}{2}\limsup_{\epsilon\rightarrow 0}\sum_{x\in K_{\epsilon}}\influence^{\epsilon}_x(A).
		\end{equation}
	\end{lemma}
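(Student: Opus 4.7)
The plan is to combine the Russo-type formula from Lemma~\ref{russo} with a local analysis of the discrete OSSS influences $\influence^{\epsilon}_x(A)$ in the limit $\epsilon\to 0$. Since Lemma~\ref{russo} gives
$$\frac{\mathrm{d}\mathbb{P}_p(A)}{\mathrm{d}p} \;=\; \mathbb{E}_p\bigl(|\mathsf{Piv}_A|\bigr),$$
the claim reduces to proving
$$\limsup_{\epsilon\to 0}\sum_{x\in K_{\epsilon}}\influence^{\epsilon}_x(A) \;\le\; 2\,\mathbb{E}_p\bigl(|\mathsf{Piv}_A|\bigr).$$

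First I would rewrite each influence via the standard conditional-variance identity
$$\influence^{\epsilon}_x(A) \;=\; 2\,\mathbb{E}_p\!\left[\,\mathbb{P}_p(A\mid \mathcal{F}_{-x})\bigl(1-\mathbb{P}_p(A\mid\mathcal{F}_{-x})\bigr)\right],$$
where $\mathcal{F}_{-x}$ denotes the $\sigma$-field generated by $(\mathcal{Z}_y)_{y\ne x}$; this holds because $\mathcal{Z}_x$ and $\tilde{\mathcal{Z}}_x$ are i.i.d.\ conditionally on $\mathcal{F}_{-x}$, so the disagreement probability equals $2q(1-q)$ with $q=\mathbb{P}_p(A\mid\mathcal{F}_{-x})$.

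Next I would exploit that $\area(N_x)\le C(\sinh\epsilon)^2\to 0$. On the event $\{|\mathcal{Z}\cap N_x|\le 1\}\cap\{|\tilde{\mathcal{Z}}\cap N_x|\le 1\}$, whose complement has probability $O((\sinh\epsilon)^4)$, only the presence ($0$ or $1$) and colour of the single point of $N_x$ can influence $\mathbf{1}_A$. Using locality of $A$ together with continuity of the Voronoi tessellation under a perturbation of a single generator confined to the shrinking region $N_x$, I would argue that in the limit the conditional variance is dominated by the pivotality probability:
$$\mathbb{P}_p(A\mid\mathcal{F}_{-x})\bigl(1-\mathbb{P}_p(A\mid\mathcal{F}_{-x})\bigr) \;\le\; \mathbb{P}_p\bigl(\mathsf{Piv}_A\cap N_x\ne \varnothing \mid \mathcal{F}_{-x}\bigr) + o\bigl(\area(N_x)\bigr).$$
Taking expectation, summing over $x\in K_{\epsilon}$, and noting that for sufficiently small $\epsilon$ each pivotal point belongs to exactly one sector, one obtains
$$\sum_{x\in K_{\epsilon}}\influence^{\epsilon}_x(A) \;\le\; 2\,\mathbb{E}_p\bigl(|\mathsf{Piv}_A|\bigr) + o(1),$$
where the contribution of sectors outside the locality ball of $A$ vanishes identically, and the tail sum over far-away annuli is controlled by the Poisson estimate $\mathbb{P}_p(\mathcal{Z}\cap \ball_p(0,t)=\varnothing)\le \exp(-c_1 t^d)$ already used in the proof of Lemma~\ref{russo}.

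The main obstacle is the passage from conditional variance to colour-pivotality in the limit: resampling $\mathcal{Z}_x$ alters both positions \emph{and} colours of points in $N_x$, whereas $\mathsf{Piv}_A$ only records colour-change pivotality. I would address this by showing that, with probability tending to one as $\epsilon\to 0$, no generator of $\mathcal{Z}\setminus\mathcal{Z}_x$ lies within a $\sqrt{\epsilon}$-neighbourhood of $\partial N_x$; conditionally on this event, perturbing or removing a single generator inside $N_x$ leaves the Voronoi cells, and hence the colouring, unchanged outside a shrinking neighbourhood of $N_x$. Combined with the local measurability of $A$, this forces the only mechanism by which $\mathbf{1}_A$ can flip under a resample to be a genuine colour pivotality at a point inside $N_x$, which is exactly what the bound above asserts.
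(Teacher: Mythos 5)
Your reduction to showing $\limsup_{\epsilon\to 0}\sum_{x\in K_\epsilon}\influence^{\epsilon}_x(A)\le 2\,\mathbb{E}_p(|\mathsf{Piv}_A|)$ via Lemma~\ref{russo} is exactly the paper's strategy, and the observation that $|\mathcal{Z}\cap \annsim_x|+|\tilde{\mathcal{Z}}\cap \annsim_x|\ge 2$ only with probability $O(\epsilon^4)$ is also correct and matches the paper. However, the way you dispose of the ``main obstacle'' is where a genuine gap appears. You propose that, conditionally on no generator of $\mathcal{Z}\setminus\mathcal{Z}_x$ lying within a $\sqrt{\epsilon}$-neighbourhood of $\partial \annsim_x$, ``perturbing or removing a single generator inside $\annsim_x$ leaves the Voronoi cells, and hence the colouring, unchanged outside a shrinking neighbourhood of $\annsim_x$.'' This is false: the Voronoi cell of a generator in $\annsim_x$ can extend far beyond $\annsim_x$, and every one of its bounding bisectors moves when that generator is displaced (or disappears when it is removed), so the colouring changes on regions that are nowhere near $\annsim_x$. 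As a result, the inequality you invoke --- conditional variance $\le\mathbb{P}_p(\mathsf{Piv}_A\cap\annsim_x\neq\varnothing\mid\mathcal{F}_{-x})+o(\area(\annsim_x))$ --- is not established, and the $o(\area(\annsim_x))$ error is not shown to be uniform over the roughly $\epsilon^{-2}$ sectors inside the locality ball (the necessary precondition for the total error to vanish).

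The paper avoids this issue entirely and more simply. Having discarded the $O(\epsilon^4)$ event where both copies have a point, the only relevant configurations left are those where one of $\mathcal{Z}_x,\tilde{\mathcal{Z}}_x$ is a singleton and the other is empty, which is a pure removal (or addition) of a single point $z$, not a displacement. For such a removal, the colourings satisfy the deterministic pointwise sandwich $\omega(\,\cdot\,;\,\mathcal{Z} \text{ with } z \text{ white})\le\omega(\,\cdot\,;\,\mathcal{Z}\setminus\{z\})\le\omega(\,\cdot\,;\,\mathcal{Z}\text{ with }z\text{ black})$, so any disagreement of $\mathbf{1}_A$ under removal forces $z\in\mathsf{Piv}_A$. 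That gives a clean, exact set inclusion $\{\mathbf{1}_A(\mathcal{Z})\neq\mathbf{1}_A(\tilde{\mathcal{Z}}),|\mathcal{Z}_x|=1,|\tilde{\mathcal{Z}}_x|=0\}\subseteq\{|\mathsf{Piv}_A\cap\annsim_x|\ge 1\}$ with no geometric continuity argument and no uncontrolled $o(\cdot)$ term. The tail over $x\notin \ball_p(0,4n)$ is then handled by the explicit bound $\influence^\epsilon_x(A)\le c_2(\sinh\epsilon)^2\exp(-4\pi\lambda\sinh(d_p(0,x)/8)^2)$, which your sketch gestures toward but does not carry out. You should replace the geometric-perturbation argument by the paper's removal-based set inclusion; without it, your proof does not close.
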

	
	\begin{proof}[Proof of Lemma 4]
		
		For a fixed local increasing event $A$, we assume $A$ only depends on colours in $\ball_p(0,n)$. First we prove that for any $m\ge 1$, 
		$$ 
		\frac{\text{d}\mathbb{P}_p(A)}{\text{d}p}\ge \frac{1}{2}\limsup_{\epsilon\rightarrow 0}\sum_{x\in K_{\epsilon}\cap \ball_p(0,m)}\influence^{\epsilon}_x(A).$$
			
		For a fixed $m\ge 1$, every $x\in K_{\epsilon}\cap \ball_p(0,m)$, observe that  $\mathcal{Z}_x=\tilde{\mathcal{Z}}_x=\varnothing$ leads to $\mathcal{Z}=\tilde{\mathcal{Z}}$ and  $\mathbf{1}_A(\mathcal{Z})= \mathbf{1}_A(\tilde{\mathcal{Z}})$. Combining with the definition of Poisson point process we get that with probability $O({\epsilon}^4)$, $\mathcal{Z}_x\cup\tilde{\mathcal{Z}}_x$ contains more than one point, then
		$$\begin{aligned}
			&\ \mathbb{P}_{p}(\mathbf{1}_A(\mathcal{Z})\neq \mathbf{1}_A(\tilde{\mathcal{Z}}))\\
			=&\ 2\mathbb{P}_{p}(\mathbf{1}_A(\mathcal{Z})\neq \mathbf{1}_A(\tilde{\mathcal{Z}}),|\mathcal{Z}_x|=1,|\tilde{\mathcal{Z}}_x|=0)+\mathbb{P}_{p}(\mathbf{1}_A(\mathcal{Z})\neq \mathbf{1}_A(\tilde{\mathcal{Z}}),|\mathcal{Z}_x|+|\tilde{\mathcal{Z}}_x|\ge 2)\\
			\le&\ 2\mathbb{P}_{p}(|\mathsf{Piv_A}\cap \annsim_x|\ge 1, |\mathcal{Z}_x|=1,|\tilde{\mathcal{Z}}_x|=0)+O({\epsilon}^4)\\
			\le&\ 2\mathbb{E}_{p}(|\mathsf{Piv_A}\cap \annsim_x|)+O({\epsilon}^4).
		\end{aligned}$$
		
		Summing this inequality over all points $x$ which belongs to $K_{\epsilon}\cap \ball_p(0,m)$ for every $m\ge 1$ gives
		$$\sum_{x\in K_{\epsilon}\cap \ball_p(0,m)}\influence^{\epsilon}_x(A)\le 
		2\mathbb{E}_{p}(|\mathsf{Piv_A}|)+o(\epsilon^2).$$

		Taking the $\limsup $ and using Lemma \ref{russo} leads to
		$$ 
		\frac{\text{d}\mathbb{P}_p(A)}{\text{d}p}\ge \frac{1}{2}\limsup_{\epsilon\rightarrow 0}\sum_{x\in K_{\epsilon}\cap \ball_p(0,m)}\influence^{\epsilon}_x(A).$$
		
	Since the inequality holds for all $m\ge 1$, we can take $m=4n$ and derive an estimate for the influence of points in $K_{\epsilon}\cap \ball_p(0,4n)^c$.
		
	For every $x\in K_{\epsilon}\cap \ball_p(0,4n)^c$, if $\mathbf{1}_A(\mathcal{Z})\neq \mathbf{1}_A(\tilde{\mathcal{Z}})$,  $\mathcal{Z}\cup \mathcal{\tilde{Z}}$ must have at least one point in $\annsim_x$, and there exists at least one point in $(\mathcal{Z}\cup \mathcal{\tilde{Z}})\cap \annsim_x$ with its cell intersecting $\ball_p(0,n)$, i.e., $D_n(\mathcal{Z})\cap \ball_p(0,d_p(0,x))^c\neq \varnothing$. Therefore, combining \eqref{4times} and independence of Poisson point process in disjoint regions implies
	$$\begin{aligned}
	\influence^{\epsilon}_x(A)
	=&\ \mathbb{P}_{p}(\mathbf{1}_A(\mathcal{Z})\neq \mathbf{1}_A(\tilde{\mathcal{Z}}))\\
	\le&\ \mathbb{P}_p(|(\mathcal{Z}\cup \mathcal{\tilde{Z}})\cap \annsim_x|\ge 1)\mathbb{P}_p(\mathcal{Z}\cap \ball_p(0,\frac{1}{4}d_p(0,x))=\varnothing||(\mathcal{Z}\cup \mathcal{\tilde{Z}})\cap \annsim_x|\ge 1)\\
	=&\ \mathbb{P}_p(|(\mathcal{Z}\cup \mathcal{\tilde{Z}})\cap \annsim_x|\ge 1)\mathbb{P}_p(\mathcal{Z}\cap \ball_p(0,\frac{1}{4}d_p(0,x))=\varnothing)\\
	\le&\  c_2(\sinh \epsilon)^2\exp(-4\pi \lambda\sinh(\frac{d_p(0,x)}{8})^2).
	\end{aligned} $$
	Here, $c_2$ is a constant not depending on $\epsilon$.
	The proof then follows from summing over all $x\in K_{\epsilon}\cap \ball_p(0,4n)^c$ and letting $\epsilon$ go to zero.
	\end{proof}
	
	\subsection{Construction of the algorithm}
	
	Based on the space $(\prod_{x\in K_{\epsilon}}\annsim_x,\mathbb{P}_p)$ introduced in Section 3.1, we are now going to construct an algorithm $\algorithm_k$ determining $\mathbf{1}_{0\leftrightarrow \sph_p(0,n)}$ with its revealment having the following upper bound.

	\begin{lemma}
		\label{algorithmbdd}
		There exists a constant $c_0$ only depending on $p$ and $\lambda$ such that for any $k\in \{0,\cdots, n\}$, we can find an algorithm $\algorithm_k$ determining $f=\mathbf{1}_{0\leftrightarrow \sph_p(0,n)}$ such that 
		\begin{equation}\label{revealment}
			\delta_x(\algorithm_k)\le c_0\mathbb{P}_{p}(x\longleftrightarrow \sph_p(0,k)).
		\end{equation}
	\end{lemma}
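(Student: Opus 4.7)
The plan is to build $\algorithm_k$ as a breadth-first exploration of the ``black cluster of $\sph_p(0,k)$'', namely the union of black Voronoi cells connected to $\sph_p(0,k)$ through chains of adjacent black cells. Since every continuous black path from $0$ to $\sph_p(0,n)$ must cross $\sph_p(0,k)$ for $0\le k\le n$, the value of $f=\mathbf{1}_{0\leftrightarrow \sph_p(0,n)}$ is determined as soon as this cluster is identified: $f=1$ precisely when the cluster contains the origin and meets $\sph_p(0,n)$. It therefore suffices to design an algorithm whose total queries reveal this cluster.

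Concretely, I would initialize the explored set $E\subset K_{\epsilon}$ to consist of those $x$ with $\annsim_x\cap \sph_p(0,k)\neq \varnothing$ and reveal $\mathcal{Z}_x$ for each such $x$. At each subsequent step I would maintain a frontier of unexplored representatives $y$ whose sectors, under the currently revealed configuration, could still contain a black point whose Voronoi cell neighbours a cell already known to belong to the cluster; the algorithm reveals one such $y$ at a time until the frontier is empty. To prove \eqref{revealment}, I would argue that whenever $x$ ends up in $E$ either $\annsim_x\cap \sph_p(0,k)\neq \varnothing$ (contributing only $O(1)$ sectors per unit hyperbolic area on $\sph_p(0,k)$, each satisfying $\mathbb{P}_p(x\leftrightarrow \sph_p(0,k))\ge c>0$ via a local construction) or else the exploration reached $\annsim_x$ along a chain of discovered black cells, producing a continuous black path from $\sph_p(0,k)$ to within distance $O(\epsilon)$ of $\annsim_x$. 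Combining this with stationarity of the Poisson process and the uniform diameter bound on $\annsim_x$, I would extract $\delta_x(\algorithm_k)\le c_0\mathbb{P}_p(x\leftrightarrow \sph_p(0,k))$ for a constant $c_0=c_0(\lambda,p)$.

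The main obstacle is the nonlocality of Voronoi cells: revealing $\mathcal{Z}_x$ does not pin down the full Voronoi cells of its points, since they may be cut off by points lying in still-unexplored sectors. Consequently, the algorithm must occasionally over-reveal ``border'' sectors whose cells ultimately turn out not to belong to the cluster, and the delicate point is to show that these over-reveals do not spoil the bound. To this end I would invoke the tail estimate \eqref{4times} on the reach of Voronoi cells, which ensures that with overwhelming probability every cell intersecting a bounded region is generated by a point at uniformly bounded distance from it. With this in hand, the event that $x$ is exposed still entails a continuous black path from $\sph_p(0,k)$ into a bounded neighbourhood of $\annsim_x$, which matches $\{x\leftrightarrow \sph_p(0,k)\}$ up to the constant factor absorbed into $c_0$.
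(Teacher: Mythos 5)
Your exploration scheme (grow the black cluster of $\sph_p(0,k)$ by a BFS on sectors, with an auxiliary routine that reveals surrounding sectors until the colours in the current sector are pinned down) is essentially the same as the paper's construction of $\algorithm_k$ and $\discover_x$, and your observation that every revealed $x$ lies close to a black path emanating from $\sph_p(0,k)$ is indeed the crux. But the way you propose to close the argument has a genuine gap.

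The step you gloss over is the conversion from \emph{``a black path from $\sph_p(0,k)$ reaches a bounded neighbourhood of $\annsim_x$''} to \emph{``$x\leftrightarrow\sph_p(0,k)$ up to a uniform factor.''} Invoking ``stationarity and the uniform diameter bound on $\annsim_x$'' does not deliver this: the event $\{\bar{\annsim}_x\leftrightarrow\sph_p(0,k)\}$ strictly contains $\{x\leftrightarrow\sph_p(0,k)\}$, and without positive association there is no reason for the two probabilities to be comparable pointwise. What the paper actually uses at this point is the FKG inequality \eqref{FKG}: writing
$\mathbb{P}_p(\bar{\annsim}_x\leftrightarrow\sph_p(0,k))\le \mathbb{P}_p(x\leftrightarrow\sph_p(0,k)) \big/ \mathbb{P}_p\bigl(\ball_p(z_x,1)\text{ all black}\,\big|\,\bar{\annsim}_x\leftrightarrow\sph_p(0,k)\bigr)$
and applying FKG to the two increasing events ``$\ball_p(z_x,1)$ is all black'' and ``$\bar{\annsim}_x\leftrightarrow\sph_p(0,k)$'' drops the conditioning, yielding a denominator $\mathbb{P}_{\delta}(\ball_p(z_x,1)\text{ all black})$ that is a positive constant uniform in $x$ (by homogeneity of $\mathbb{H}^d$). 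That FKG step is precisely the ``$c_0$'' in the statement, and it is not supplied by your sketch; you should make it explicit.

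A secondary remark: you propose to use the tail bound \eqref{4times} to control the over-revealment caused by $\discover$-type routines. In the paper \eqref{4times} is used only to ensure integrability and almost-sure termination; the quantitative revealment bound itself is obtained purely from the chain ``$x$ revealed $\Rightarrow \bar{\annsim}_x\leftrightarrow\sph_p(0,k)$'' followed by FKG, without having to estimate how far $\discover_y$ reaches. Your route could in principle also be made to work, but it would require the extra bookkeeping you allude to, whereas the FKG argument avoids it altogether.
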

	
	\begin{proof}
		
	We first define an auxiliary algorithm $\discover_x$ helping us reveal the colour of $\annsim_x$. 
	
	The idea of the algorithm $\discover_x$ is to check the random variables $\mathcal{Z}_y$ $(y\in K_{\epsilon})$ near the fixed point $x$ until the colour of every point in $\bar{\annsim}_x$ is known. 
	To put it precisely, we first set a parameter $k=0$. 
	When $k=l$ ($l$ is a positive integer), if the colour of all points inside $\bar{\annsim}_x$ are determined, the algorithm terminates and returns the colours of points in $\bar{\annsim}_x$ as the output of $\discover_x$. Otherwise, the algorithm checks the value of $\mathcal{Z}_y$ for $y\in K_{\epsilon}\cap \{y:d_p(y,x)\le k\}$ and set $k=l+1.$
	
	And the algorithm $\algorithm_k$ is constructed as follows:
	
	
	Set $X_0=\varnothing, Z_0=\sph_p(0,k), W_0=\varnothing, H_0=\varnothing, K_0=\varnothing, M_0=\varnothing$, and use $I=K_{\epsilon}\cap \ball_p(0,n+1)$ to include the set of points we may need to check.
	
	At every step $t$, assume that $X_t\subset K_{\epsilon}$ and $Z_t\subset \mathbb{H}^2$ have been determined.
	Let $M_{t+1}=\{x\in I\setminus X_t:\bar{\annsim}_x\cap Z_t\neq \varnothing\}$. If $M_{t+1}=\varnothing$, the algorithm terminates.
	If $M_{t+1}\neq \varnothing$, pick $y\in M_{t+1}$ and the algorithm runs the following steps:
		
	\begin{enumerate}
			\item Run $\discover_y$.
			\item Define $X_{t+1}=X_t\cup \{y\}$.
			\item Let 
			$$\begin{aligned}
			&H_{t+1}=\{\text{all\ the\ black\ points\ in\ }\omega\cap \annsim_y\ \text{connected to\ }Z_t\},\\
			&K_{t+1}=\{\text{all\ the\ black\ points\ in\ }\omega\cap \annsim_y\ \text{not\ connected to\ }Z_t\}.
			\end{aligned}$$
			\item Set $L_{t+1}=\{\text{all\ the\ points\ in\ }W_t\ \text{connected to\ }Z_t\cup H_{t+1}\}.$ Here, $L_{t+1}$ denote those black points in the first $t$ steps which are not discovered to be connected to $\sph_p(0,k)$ until the $t+1$ step runs.
			\item Set 
			$$\begin{aligned}
			&Z_{t+1}=Z_t\cup H_{t+1}\cup L_{t+1},\\
			&W_{t+1}=(W_t\setminus L_{t+1})\cup K_{t+1},
			\end{aligned}$$
			where $W_{t+1}$ represents those black points in the first $t+1$ steps that,  based on what are revealed now, we cannot determine whether they are connected to $\sph_p(0,k)$.
		\end{enumerate}
	
	The algorithm $\algorithm_k$ clearly determines $\mathbf{1}_{0\leftrightarrow \sph_p(0,n)}$ since it actually reveals all the black connected components of $\sph_p(0,k)$ in $\omega \cap \ball_p(0,n)$. Furthermore, every auxiliary algorithm $\discover_x$ only needs to discover finite points almost surely due to the integrability of $\mathbb{E}_p(|D_n(\mathcal{Z})|)$, therefore the algorithm $\algorithm_k$ will terminate in a finite time. 
	
	And we only need to prove the desired inequality \eqref{revealment}, which is trivial for $x\in K_{\epsilon}\setminus \ball_p(0,n+1)$ since those points will never be discovered in $\algorithm_k$. For $x\in K_{\epsilon}\cap \ball_p(0,n+1)$, if $x$ is revealed during the process, there exists $y\in \bar{\annsim}_x$ such that $y\longleftrightarrow \sph_p(0,k)$. For every representative point $x$ we can first fix a $z_x\in \mathbb{H}^2$ satisfying $\bar{\annsim}_x\subset \ball_p(z_x,1)$. Then $x$ is revealed during the process implies $\bar{\annsim}_x\leftrightarrow \sph_p(0,k)$.
	

	Combining Lemma \ref{FKG} and $p\in [\delta,1-\delta]$ we have
	$$\begin{aligned}
	&\delta_x(\algorithm_k)=\mathbb{P}_{p}(\bar{\annsim}_x \leftrightarrow \sph_p(0,k)) =\frac{\mathbb{P}_p(x\leftrightarrow \sph_p(0,k))}{\mathbb{P}_{p}(x\leftrightarrow \sph_p(0,k)|\bar{\annsim}_x \leftrightarrow \sph_p(0,k))}\\
	\le &\ \frac{\mathbb{P}_p(x\leftrightarrow \sph_p(0,k))}{\mathbb{P}_{p}(S_x \text{\ all\ black\ }|\bar{\annsim}_x \leftrightarrow \sph_p(0,k))}\\
    \overset{\eqref{FKG}} \le &\ \frac{\mathbb{P}_p(x\leftrightarrow \sph_p(0,k))}{\mathbb{P}_{p}(S_x \text{\ all\ black\ })}
	\le\frac{\mathbb{P}_p(x\leftrightarrow \sph_p(0,k))}{\mathbb{P}_{\delta}(\ball_p(z_x,1)\text{\ all\ black\ })}.
	\end{aligned}$$
	
	Denote $\frac{1}{c_0}=\mathbb{P}_{\delta}(\ball_p(z_x,1)\text{\ all\ black})$, then for every $k\in\{0,\cdots, n\}$, the constructed algorithm $\algorithm_k$ satisfies 
	$\delta_x(\algorithm_k)\le c_0\mathbb{P}_{p}(x\longleftrightarrow \sph_p(0,k)).$
	\end{proof}

	\subsection{Proof of Theorem 1}
	
	Before proving Theorem \ref{thm1}, we first introduce a lemma that converts the proof of Theorem \ref{thm1} to showing a family of differential inequalities.
	\begin{lemma}
		\label{lem6}
		Consider a converging sequence of increasing differential function $f_n:[\alpha_0, \alpha_1]\rightarrow [0, M]$ such that	for all $n\ge 1$, $$f_n'\ge \frac{n}{\Sigma_n}f_n ,$$
		where $\Sigma_n=\sum_{k=0}^{n-1}f_k$.
		Then, there exists $x_1\in [\alpha_0, \alpha_1]$ such that:\\
		1. For any $x<x_1$, there exists $c_x>0$ such that for any large $n$ , $f_n(x)\le \exp(-c_xn).$\\
		2. For any $x>x_1$, let $f(x)=\lim\limits_{n\to +\infty}f_n(x)$, then $f(x)\ge x-x_1$.
	\end{lemma}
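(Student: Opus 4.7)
The plan is to adapt the two-regime strategy of Duminil-Copin, Raoufi and Tassion from their papers on Bernoulli and Voronoi sharpness. Set
\[
x_1 \coloneqq \inf\bigl\{x \in [\alpha_0, \alpha_1] : f(x) > 0 \bigr\},
\]
where $f \coloneqq \lim_{n\to\infty} f_n$. Since each $f_n$ is increasing in its variable, $f$ is increasing as well, so $f \equiv 0$ on $[\alpha_0, x_1)$ and $f > 0$ on $(x_1, \alpha_1]$ (with the convention $x_1 = \alpha_1$ if $f \equiv 0$). The subcritical case $x < x_1$ will exploit the smallness of $\Sigma_n/n$ below $x_1$; the supercritical case $x > x_1$ will use its Ces\`aro identification with $f$.

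For Part 1, fix $x < x_1$ and choose $y \in (x, x_1)$. The plan is first to show $\Sigma_n(y)$ stays bounded in $n$; granting this with some constant $C$, the monotonicity of each $f_k$ gives $\Sigma_n(z) \le C$ for every $z \in [x,y]$ and every $n$, and the hypothesis then reads $(\log f_n)'(z) \ge n/C$ on $[x,y]$. Integrating from $x$ to $y$ yields $f_n(x) \le M \exp\bigl(-n(y-x)/C\bigr)$, producing the exponential decay with, say, $c_x = (y-x)/(2C)$ for $n$ large.

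For Part 2, fix $x > x_1$ and pick $y \in (x_1, x)$, so that $f(y), f(x) > 0$. Integrating the hypothesis on $[y,x]$ gives
\[
\log f_n(x) - \log f_n(y) \ge \int_y^x \frac{n}{\Sigma_n(z)}\, dz.
\]
Since $\Sigma_n(z)/n \to f(z) > 0$ pointwise on $[y,x]$ by Ces\`aro, Fatou's lemma in the limit $n \to \infty$ produces
\[
\log f(x) - \log f(y) \ge \int_y^x \frac{dz}{f(z)}.
\]
Because $f$ is monotone and hence differentiable almost everywhere, Lebesgue's differentiation theorem applied to this integrated inequality yields $(\log f)'(x) \ge 1/f(x)$, equivalently $f'(x) \ge 1$, for almost every $x \in (x_1, \alpha_1]$. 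Since $f(x) - f(y) \ge \int_y^x f'(z)\, dz$ for monotone $f$, this gives $f(x) - f(y) \ge x-y$; letting $y \to x_1^+$ together with $f \ge 0$ yields $f(x) \ge x - x_1$.

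The main obstacle is the boundedness claim in Part 1, i.e.\ ruling out the existence of a ``gap'' --- a point $y$ with $f(y) = 0$ but $\sum_k f_k(y) = \infty$ --- strictly to the left of $x_1$. Assuming such a $y$ exists, one applies the hypothesis on a short interval $[y, y+\delta] \subset [\alpha_0, x_1)$ where $\Sigma_n(y+\delta)/n \to 0$; the resulting bound $f_n(y) \le M \exp(-\delta n/\Sigma_n(y+\delta))$ forces $f_n(y)$ to decay faster than any polynomial, and iterating or refining this rate estimate upgrades it to summability of $\sum_k f_k(y)$, contradicting the assumption. Carrying out this bootstrap carefully --- controlling the decay rate $n/\Sigma_n(y+\delta)$ and showing the gap is at most a single point --- is the delicate step. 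Once this is handled, Fatou's lemma, Lebesgue's differentiation theorem, and integration of $f' \ge 1$ against the monotone $f$ complete the argument in a standard fashion.
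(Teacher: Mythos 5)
The paper does not actually re-prove this lemma --- it just cites \cite{sharpv}, Lemma~3.1 --- so I compare your attempt with that known proof. Your Part~2 is sound and takes a genuinely different route: you integrate $(\log f_n)' \ge n/\Sigma_n$ over $[y,x]$ with $y>x_1$, pass to the limit via Fatou together with the Ces\`aro fact $\Sigma_n/n \to f$, and extract $f' \ge 1$ a.e.\ by Lebesgue differentiation, whereas \cite{sharpv} work with the auxiliary sequence $T_n = \frac{1}{\log n}\sum_{k=1}^n f_k/k$ and telescope $\sum_{k\le n} f_k/\Sigma_k \ge \log\Sigma_{n+1}-\log\Sigma_1$. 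Your version is arguably cleaner, but it can only be run on the side where $f>0$, which is why it cannot be mined for the subcritical step.

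The gap is in Part~1, and it is precisely the obstacle that the definition of the critical point in \cite{sharpv} is designed to circumvent. You correctly note that $y < x_1$ gives $\Sigma_n(y+\delta)/n \to 0$ by Ces\`aro and then claim that the bound $f_n(y) \le M\exp(-\delta n/\Sigma_n(y+\delta))$ ``forces $f_n(y)$ to decay faster than any polynomial.'' That inference is false: $n/\Sigma_n(y+\delta)\to\infty$ at an \emph{unquantified} rate, and if, say, $\Sigma_n(y+\delta) \asymp n/\log\log n$ the bound only gives $f_n(y) \lesssim (\log n)^{-\delta}$, which decays \emph{slower} than any polynomial and does not make $\sum_k f_k(y)$ summable. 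Nothing in ``$f(y)=0$'' alone excludes such a slowly-growing $\Sigma_n$, so the bootstrap you sketch never gets off the ground.

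The standard fix is to define the critical point not as $\inf\{x: f(x)>0\}$ but as $\tilde{x}:=\inf\{x:\limsup_n \log\Sigma_n(x)/\log n \ge 1\}$. Below $\tilde{x}$ this \emph{by definition} gives $\Sigma_n \le n^{1-\eta}$ for some $\eta>0$ and all large $n$; one integration then yields the \emph{stretched} exponential $f_n \le M\exp(-c\,n^\eta)$, whose summability bounds $\Sigma_n$, and a second integration upgrades to true exponential decay. The equality $\tilde{x}=x_1$ is a corollary of Parts~1--2, not an ingredient; your plan implicitly assumes it at the outset, which is where the circularity hides.
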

	The proof of this lemma can be found in [\cite{sharpv}, Lemma 3.1]. 
		
	\begin{proof}[Proof of Theorem 1]
		
	By Lemma \ref{lem6}, it suffices to show that for every $0<\delta<\frac{1}{2}$, there exists $c=c(\delta)>0$ such that $\forall n\ge 1,\ p\in [\delta, 1-\delta]$, $S_n(p):=\sum_{k=0}^{n-1}\theta_k(p)$, 
	\begin{equation} \label{ineq}
	\theta_n'(p)\ge \frac{cn}{S_n(p)}\theta_n(p).
	\end{equation}
	Indeed, combining the fact that $0<p_c<1$ [\cite{BS01}, Theorem 1.5], the result follows from applying Lemma \ref{lem6} to $\alpha_0:=\delta<p_c<1-\delta=:\alpha_1$ and $f_n=\frac{\theta_n}{c}$.
		
	Now let $f=\mathbf{1}_{0\leftrightarrow \sph_p(0,n)}$, $\algorithm_k$ is an algorithm determining the function $f$, applying the OSSS inequality \eqref{osss} then leads to
	\begin{equation}\label{osssapply}
		\theta_n(p)(1-\theta_n(p))\le \sum_{x\in K_{\epsilon}}\delta_x(\algorithm_k)\influence^{\epsilon}_x(0\leftrightarrow \sph_p(0,n)).
	\end{equation}
	
	For every $\delta\in (0,\frac{1}{2})$, $p\in [\delta,1-\delta]$, it is easy to see that
	\begin{equation}\label{boundfortheta}
		\theta_n(\delta)\le \theta_n(p)\le \theta_n(1-\delta)\le \theta_1(1-\delta).
	\end{equation}

	Combining inequalities \eqref{revealment} and \eqref{boundfortheta} yields
	$$\begin{aligned}
	\theta_n(p)\le&\ \frac{1}{1-\theta_1(1-\delta)}\sum_{x\in K_{\epsilon}}\delta_x(\algorithm_k)\influence^{\epsilon}_x(0\leftrightarrow \sph_p(0,n))\\
	\overset{\eqref{revealment}}\le&\ \frac{c_0}{1-\theta_1(1-\delta)}\sum_{x\in K_{\epsilon}}\mathbb{P}_{p}(x\leftrightarrow \sph_p(0,k))\influence^{\epsilon}_x(0\leftrightarrow \sph_p(0,n)).
	\end{aligned}$$

	Summing the above inequality from $1$ to $n$ and dividing by $n$ gives
	\begin{equation}\label{tttt}
		\theta_n(p)\le \frac{c_3}{n}\sum_{x\in K_{\epsilon}}\sum_{k=1}^n\mathbb{P}_{p}(x\leftrightarrow \sph_p(0,k))\influence^{\epsilon}_x(0\leftrightarrow \sph_p(0,n)).
	\end{equation}

	We also have 
	\begin{equation}\label{sumbound}
		\sum_{k=1}^n\mathbb{P}_{p}(x\leftrightarrow \sph_p(0,k))\le \sum_{k=1}^n\theta_{d(x,\sph_p(0,k))}(p)\le 2\sum_{k=1}^n\theta_k(p)=2S_n(p).
	\end{equation}

	Taking $c=\frac{1}{4c_3}$, using \eqref{tttt},  \eqref{sumbound} and \eqref{derivative}, we then have for all $ p\in [\delta,1-\delta], n\ge 1,$
	$$\begin{aligned}
	\theta_n(p)
	&\overset{\eqref{sumbound}}\le \frac{2c_3}{n}S_n(p)\sum_{x\in K_{\epsilon}}\influence^{\epsilon}_x(0\leftrightarrow \sph_p(0,n))\\
	&\overset{\eqref{derivative}}\le \frac{4c_3}{n}S_n(p)\times \frac{\text{d}\mathbb{P}_{p}(0\leftrightarrow \sph_p(0,n))}{\text{d}p}=\frac{1}{cn}S_n(p)\theta_n'(p), \qquad 
	\end{aligned}$$
	which corresponds to the form of inequality \eqref{ineq} and therefore concludes the proof.
	
	\end{proof}

	\bibliographystyle{alpha}
	\bibliography{ref}

\begin{thebibliography}{DCRT19b}

\bibitem[AB87]{AB87}
Michael Aizenman and David~J. Barsky.
\newblock Sharpness of the phase transition in percolation models.
\newblock {\em Communications in Mathematical Physics}, 108(3):489--526, 1987.

\bibitem[AB18]{AB18}
Daniel Ahlberg and Rangel Baldasso.
\newblock Noise sensitivity and voronoi percolation.
\newblock {\em Electronic Journal of Probability}, 23:1--21, 2018.

\bibitem[ABGM14]{ABGM14}
Daniel Ahlberg, Erik Broman, Simon Griffiths, and Robert Morris.
\newblock Noise sensitivity in continuum percolation.
\newblock {\em Israel Journal of Mathematics}, 201(2):847--899, 2014.

\bibitem[AdlRG21]{ADG21}
Daniel Ahlberg, Daniel de~la Riva, and Simon Griffiths.
\newblock On the rate of convergence in quenched voronoi percolation.
\newblock {\em arXiv:2103.01870}, 2021.

\bibitem[AGMT16]{AGMT16}
Daniel Ahlberg, Simon Griffiths, Robert Morris, and Vincent Tassion.
\newblock Quenched voronoi percolation.
\newblock {\em Advances in Mathematics}, 286:889--911, 2016.

\bibitem[ATT18]{ATT17}
Daniel Ahlberg, Vincent Tassion, and Augusto Teixeira.
\newblock Sharpness of the phase transition for continuum percolation in
  $\mathbb{R}^2$.
\newblock {\em Probability Theory and Related Fields}, 172(1):525--581, 2018.

\bibitem[BH57]{bernoulli}
Simon~R Broadbent and John~M Hammersley.
\newblock Percolation processes: I. crystals and mazes.
\newblock In {\em Mathematical proceedings of the Cambridge philosophical
  society}, volume~53, pages 629--641. Cambridge University Press, 1957.

\bibitem[BR06a]{BR06}
B{\'e}la Bollob{\'a}s and Oliver Riordan.
\newblock The critical probability for random voronoi percolation in the plane
  is 1/2.
\newblock {\em Probability theory and related fields}, 136(3):417--468, 2006.

\bibitem[BR06b]{bpercolation}
B{\'e}la Bollob{\'a}s and Oliver Riordan.
\newblock {\em Percolation}.
\newblock Cambridge University Press, 2006.

\bibitem[BS01]{BS01}
Itai Benjamini and Oded Schramm.
\newblock Percolation in the hyperbolic plane.
\newblock {\em Journal of the American Mathematical Society}, 14(2):487--507,
  2001.

\bibitem[DCRT19a]{sharpv}
Hugo Duminil-Copin, Aran Raoufi, and Vincent Tassion.
\newblock Exponential decay of connection probabilities for subcritical voronoi
  percolation in $\mathbb{R}^d$.
\newblock {\em Probability Theory and Related Fields}, 173(1-2):479--490, 2019.

\bibitem[DCRT19b]{sharpr}
Hugo Duminil-Copin, Aran Raoufi, and Vincent Tassion.
\newblock Sharp phase transition for the random-cluster and potts models via
  decision trees.
\newblock {\em Annals of Mathematics}, 189(1):75--99, 2019.

\bibitem[DCT16]{history2}
Hugo Duminil-Copin and Vincent Tassion.
\newblock A new proof of the sharpness of the phase transition for bernoulli
  percolation and the ising model.
\newblock {\em Communications in Mathematical Physics}, 343(2):725--745, 2016.

\bibitem[Gri99]{grimmett2}
Geoffrey Grimmett.
\newblock {\em Percolation}.
\newblock Springer Berlin Heidelberg, 1999.

\bibitem[HM21a]{hbvoronoi}
Benjamin~T Hansen and Tobias M{\"u}ller.
\newblock The critical probability for voronoi percolation in the hyperbolic
  plane tends to $1/2$.
\newblock {\em Random Structures $\&$ Algorithms}, 2021.

\bibitem[HM21b]{hansennew}
Benjamin~T. Hansen and Tobias Müller.
\newblock Poisson-voronoi percolation in the hyperbolic plane with small
  intensities, 2021.

\bibitem[LP17]{PPP}
G{\"u}nter Last and Mathew Penrose.
\newblock {\em Lectures on the Poisson process}, volume~7.
\newblock Cambridge University Press, 2017.

\bibitem[LPY21]{stoppingsets}
G{\"u}nter {Last}, Giovanni {Peccati}, and D.~{Yogeshwaran}.
\newblock {Phase transitions and noise sensitivity on the Poisson space via
  stopping sets and decision trees}.
\newblock {\em arXiv:2103.01870}, 2021.

\bibitem[Men86]{Men86}
Mikhail~V Menshikov.
\newblock Coincidence of critical points in percolation problems.
\newblock In {\em Soviet Mathematics Doklady}, volume~33, pages 856--859, 1986.

\bibitem[MR96]{MR96}
Ronald Meester and Rahul Roy.
\newblock {\em Continuum Percolation}.
\newblock Cambridge University Press, 1996.

\bibitem[OSSS05]{osss}
Ryan O'Donnell, Mike Saks, Oded Schramm, and Rocco Servedio.
\newblock Every decision tree has an influential variable.
\newblock In {\em Proceedings of the 46th Annual Symposium on Foundations of
  Computer Science (FOCS)}, 2005.

\end{thebibliography}
\end{document}